\newtheorem{theorem}{Theorem}[section]
\newtheorem{lemma}[theorem]{Lemma}
\theoremstyle{definition}
\theoremstyle{remark}
\newtheorem{remark}[theorem]{Remark}
\numberwithin{equation}{section}
\DeclareMathOperator{\isOdd}{ is \ odd}
\DeclareMathOperator{\isEven}{ is \ even}
\begin{document}

\title{Parity of the Partition Function $p(n,k)$}
\author{Kedar Karhadkar}

\maketitle

\begin{abstract}
Let $p(n, k)$ denote the number of partitions of $n$ into parts less than or equal to $k$. We show several properties of this function modulo 2. First, we prove that for fixed positive integers $k$ and $m$, $p(n,k)$ is periodic modulo $m$. Using this, we are able to find lower and upper bounds for the number of odd values of the function for a fixed $k$.
\end{abstract}

\section{Introduction}
A partition of a positive integer $n$ is a non-increasing sequence of positive integers whose sum equals $n$. For instance, there are  five partitions of $4$: $4, 3 + 1, 2 + 2, 2 + 1 + 1$, and $1+1+1+1$. Let $p(n)$ denote the number of partitions of $n$ (For more details on this function, see \cite{Andrews}). It has been conjectured that the odd density of $p(n)$ is $\frac{1}{2}$. That is,\\
$$\lim_{N \rightarrow \infty} \frac{\#\{n \leq N | p(n) \isOdd\}}{N} = \frac{1}{2}.$$

Although this problem remains open, several properties of the distribution of $p(n)$ modulo 2 have been discovered. In 1959, Newman \cite{Newman} showed that $p(n)$ achieves both odd and even values infinitely often, and is not periodic modulo any integer. Nicolas, Ruzsa, and S\'ark\"ozy \cite{Nicolas}  showed that the partition function satisfies
$$ \sqrt{N} \ll \#\{n \leq N | p(n) \isEven\}.$$
Ahlgren \cite{Ahlgren} found similar bounds when taking $n$ in arithmetic progressions.\\
\indent The strongest bounds known currently still do not preclude the possibility of the odd density of $p(n)$ being zero. However, the efforts made on the parity of $p(n)$ motivate a similar discussion for $p(n, k)$, the number of partitions of $n$ into parts less than or equal to $k$. It is not difficult to see that the generating function for $p(n, k)$ is given by
$$\sum_{n=0}^{\infty}p(n,k)q^n = \prod_{n=1}^{k}\frac{1}{1-q^n}.$$

Since the generating function is a finite product rather than an infinite product, $p(n, k)$ is in some ways more convenient to work with than the ordinary partition function. Kronholm \cite{Kronholm} found that the function modulo odd primes $k$ yields congruences of the form
$$p(nk, k) - p(nk - \text{lcm}(1, 2, \cdots, k), k) \equiv 0 \pmod k$$
for sufficiently large $n$.

In this paper, we will find new properties of $p(n, k)$. The most important result we will need, stated in the following theorem, reduces the odd density of $p(n, k)$ to a finite problem.
\begin{theorem} \label{thm1.1}
For any positive integers $k$ and $m$, the function $p(n, k)$ modulo $m$ is periodic on $n$. That is, given $k, m \in \mathbb{N}$, there exists $L \in \mathbb{N}$ such that $p(n, k) \equiv p(n + L, k) \pmod m$ for all values of $n$.
\end{theorem}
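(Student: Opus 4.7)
The plan is to exploit the fact that the generating function $\sum_{n \geq 0} p(n,k)\, q^n = \prod_{j=1}^k (1-q^j)^{-1}$ is rational, so its coefficients satisfy a linear recurrence with integer coefficients. First I would expand the denominator as $\prod_{j=1}^k (1-q^j) = \sum_{i=0}^N a_i q^i$, where $N = k(k+1)/2$, $a_0 = 1$, and $a_N = (-1)^k$. Multiplying the generating-function identity by this polynomial and extracting the coefficient of $q^n$ for each $n \geq 1$ produces the order-$N$ recurrence
$$p(n,k) = -\sum_{i=1}^N a_i\, p(n-i, k), \qquad n \geq 1,$$
with the convention $p(n,k) = 0$ for $n < 0$ and $p(0,k) = 1$.

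Next I would reduce modulo $m$ and apply a standard finite-state argument. Because the recurrence has order $N$, the sequence $\bigl(p(n,k) \bmod m\bigr)_{n \geq 0}$ is completely determined by any $N$ consecutive terms. Viewing each block $\bigl(p(n,k), p(n+1,k), \ldots, p(n+N-1,k)\bigr) \bmod m$ as a point in the finite set $(\mathbb{Z}/m\mathbb{Z})^N$, the pigeonhole principle forces two such blocks at indices $n_1 < n_2$ to coincide, which gives eventual periodicity with period $L = n_2 - n_1$ from index $n_1$ on.

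To upgrade eventual periodicity to full periodicity starting from $n = 0$, I would use that $a_N = (-1)^k = \pm 1$ is a unit modulo $m$, so the same recurrence can be solved in reverse for $p(n-N,k)$ in terms of $p(n-N+1,k),\ldots, p(n,k)$. Consequently the forward-shift map on blocks is a bijection on the finite set $(\mathbb{Z}/m\mathbb{Z})^N$, and every orbit of a bijection on a finite set is purely periodic; in particular the orbit beginning at $n=0$ is periodic from the start, as required.

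I do not expect a serious obstacle here: the key structural observation is simply that the denominator polynomial of the generating function has both its leading and trailing coefficients equal to $\pm 1$, which makes the associated linear recurrence fully invertible over $\mathbb{Z}/m\mathbb{Z}$. The only caveat is that this argument gives only the crude existence bound $L \leq m^N$; pinning down the minimal period, or how it depends on $k$ and $m$, would require a separate analysis and is not needed for the statement.
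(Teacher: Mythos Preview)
Your argument is correct, but it proceeds quite differently from the paper's proof. The paper works by induction on $k$, using only the two--term recurrence $p(n,k)=p(n-k,k)+p(n,k-1)$: assuming $p(\,\cdot\,,k)$ has period $L$ modulo $m$, it telescopes to show that $p(n+(k+1)L,k+1)-p(n,k+1)\equiv\sum_{i=1}^{L}p(n+(k+1)i,k)\pmod m$ is independent of further shifts by $(k+1)L$, and then sums $m$ such differences to zero. This yields the explicit period $m^{k-1}k!$. Your route instead reads off the order--$N$ linear recurrence (with $N=k(k+1)/2$) directly from the rational generating function and observes that, because both extreme coefficients of $\prod_{j=1}^{k}(1-q^j)$ are $\pm 1$, the shift map on $(\mathbb{Z}/m\mathbb{Z})^{N}$ is a bijection, forcing pure periodicity by pigeonhole. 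Your argument is more conceptual and immediately generalizes to any sequence whose generating function is rational with unit leading and trailing denominator coefficients; the trade--off is that it only gives the cruder bound $L\le m^{k(k+1)/2}$, whereas the paper's inductive argument produces the sharper and more usable period $m^{k-1}k!$, which is what the rest of the paper actually exploits for computations.
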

Given this theorem, we can explicitly compute the odd density of $p(n, k)$ for small values of $k$. Additionally, it allows us to find an upper bound for the odd density in Section 3.
\begin{theorem} \label{thm1.2}
There exist infinitely many values of $k$ such that
\end{theorem}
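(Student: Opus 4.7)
The plan is to leverage Theorem~\ref{thm1.1} to reduce the question to a finite count and then exhibit an explicit infinite family of $k$ for which the count admits the desired upper bound. Concretely, if $L_k$ denotes a period of $p(\cdot,k) \pmod{2}$, the odd density equals
$$\frac{1}{L_k}\,\#\{n \in \{0, 1, \ldots, L_k - 1\}: p(n, k) \text{ is odd}\},$$
so the task becomes estimating this count from above along a well-chosen sequence of $k$.

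The natural family to try is $k = 2^{a} - 1$ (or $k = 2^a$), since the generating function
$$G_k(q) \;=\; \prod_{n=1}^{k}\frac{1}{1-q^n}$$
simplifies drastically modulo $2$ via the Frobenius identity $(1-q^n)^2 \equiv 1-q^{2n}\pmod{2}$. For each odd $m \leq k$, the factors indexed by $m, 2m, 4m, \ldots, 2^{s_m}m$ (where $s_m$ is maximal with $2^{s_m}m \leq k$) collapse to $(1-q^m)^{-(2^{s_m+1}-1)}$, giving
$$G_k(q) \;\equiv\; \prod_{\substack{m \leq k\\ m \text{ odd}}} \frac{1}{(1-q^m)^{2^{s_m+1}-1}} \pmod 2.$$
This compact form, which involves only odd $m$, is the starting point for counting odd coefficients.

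To turn the generating-function identity into a density bound, I would exploit the recurrence $p(n,k) = p(n,k-1) + p(n-k,k)$. Iterating gives $p(n,k) \equiv \sum_{j \geq 0} p(n-jk, k-1) \pmod 2$, so whenever $p(\cdot,k-1)$ vanishes modulo $2$ on an arithmetic progression of common difference $k$ within a period, $p(n,k)$ is forced to be even at the corresponding $n$. Running an induction on $a$, with the bound for $k = 2^{a-1}-1$ feeding into the bound for $k = 2^a-1$, should propagate and, I hope, sharpen a density upper bound at each step of the family.

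The main obstacle I anticipate is obtaining a \emph{strict} bound that survives the passage to the limit $a \to \infty$. Because the exponents $2^{s_m+1}-1$ depend on $m$, the simplified generating function is not a pure power, and one must analyze its support carefully---likely by subdividing a fundamental period into blocks aligned with $\text{lcm}(1,2,\ldots,k)$ and showing that a uniformly bounded-below fraction of each block contributes even coefficients. Once that analysis is in place, letting $a \to \infty$ immediately produces the promised infinite family of $k$ meeting the claimed upper bound.
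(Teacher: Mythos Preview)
Your proposal is a research plan, not a proof: the crucial step---showing that the simplified generating function for $k=2^a-1$ (or $k=2^a$) has odd coefficients with density at most $\tfrac{2}{3}$---is never carried out. You write that the induction ``should propagate and, I hope, sharpen a density upper bound,'' and you explicitly flag the obstacle of getting a bound that survives as $a\to\infty$. Nothing in the sketch explains why the number $\tfrac{2}{3}$ would emerge from the Frobenius collapse $\prod_{m\ \text{odd}}(1-q^m)^{-(2^{s_m+1}-1)}$, and the block-by-block analysis you allude to is not performed. As written, the argument establishes only that the generating function has a nice shape modulo $2$, not any quantitative density bound.

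The paper's proof is far shorter and avoids explicit families entirely. The key observation is Lemma~\ref{lemma2}: the density of $n$ with $p(n,k)$ odd and $p(n,k+1)$ even equals the density with $p(n,k)$ odd and $p(n,k+1)$ odd; summing, the odd density of $p(\cdot,k)$ equals \emph{twice} the density of $\{n:p(n,k)\text{ odd},\ p(n,k+1)\text{ even}\}$. Hence if the odd density of $p(\cdot,k)$ exceeds $\tfrac{2}{3}$, the even density of $p(\cdot,k+1)$ exceeds $\tfrac{1}{3}$, so the odd density of $p(\cdot,k+1)$ is below $\tfrac{2}{3}$. Thus at least one of every two consecutive $k$ satisfies the bound, giving infinitely many such $k$ immediately. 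This is the missing idea: rather than analyze a specific subsequence of $k$, you compare densities at $k$ and $k+1$ via the recurrence, and the threshold $\tfrac{2}{3}$ is exactly the fixed point of the implication ``density $>c$ at $k$ forces density $<1-\tfrac{c}{2}$ at $k+1$.''
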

$$\lim_{N \rightarrow \infty}\frac{\#\{n \leq N | p(n, k) \isOdd\}}{N} \leq \frac{2}{3}.$$

In the opposite direction, we can also look at the maximum consecutive number of even values of $p(n, k)$. By observing how this number varies depending on $k$, we find a weaker lower bound in Section 4.
\begin{theorem} \label{thm1.3}
For all positive integers $k$,
$$\lim_{N \rightarrow \infty}\frac{\#\{n \leq N | p(n, k) \isOdd\}}{N} \geq \frac{2}{k(k+1)}.$$
\end{theorem}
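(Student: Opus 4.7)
The plan is to prove the sharper combinatorial statement that $p(n,k)$ is odd in every window of $d := k(k+1)/2$ consecutive integers. Combined with Theorem~\ref{thm1.1}, which guarantees that the odd density exists, this immediately yields
$$\lim_{N \to \infty} \frac{\#\{n \leq N \mid p(n,k) \isOdd\}}{N} \geq \frac{1}{d} = \frac{2}{k(k+1)}.$$

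First I would convert the generating function identity into a linear recurrence on $p(n,k) \pmod{2}$. Working modulo $2$,
$$P_k(q) \sum_{n \geq 0} p(n,k)\, q^n \equiv 1 \pmod{2}, \qquad P_k(q) := \prod_{n=1}^{k}(1+q^n),$$
where $P_k$ has degree $d$. The structural observation that makes the argument go is that $P_k$ is self-reciprocal in $\mathbb{F}_2[q]$ (each factor $1+q^n$ is), so writing $P_k(q) = \sum_{i=0}^d a_i q^i$ one has $a_0 = a_d = 1$. Extracting the coefficient of $q^m$ for $m \geq d$ yields
$$p(m,k) + a_1 p(m-1,k) + \cdots + a_{d-1} p(m-d+1,k) + p(m-d,k) \equiv 0 \pmod{2},$$
which may be solved for \emph{either} endpoint $p(m,k)$ or $p(m-d,k)$.

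The main step is a finite descent. Assume for contradiction that some window $p(a,k), p(a+1,k), \ldots, p(a+d-1,k)$ is entirely even, and choose $a$ minimal with this property. If $a = 0$, this contradicts $p(0,k) = 1$ directly. Otherwise $a \geq 1$, so the recurrence applied at $m = a+d-1 \geq d$ expresses $p(a-1,k)$ as an $\mathbb{F}_2$-linear combination of the (even) values $p(a,k), \ldots, p(a+d-1,k)$, forcing $p(a-1,k)$ to be even as well. But then $[a-1, a+d-2]$ is a window of $d$ consecutive even values starting strictly earlier than $a$, contradicting minimality.

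I expect the only non-routine ingredient to be the reciprocity $a_d \equiv 1 \pmod{2}$: without it the recurrence could not be inverted to propagate backwards, and the descent would break down. Everything else is index-shifting bookkeeping, and once the ``no window of $d$ consecutive even values'' claim is in hand, the density bound follows immediately from the periodicity supplied by Theorem~\ref{thm1.1}.
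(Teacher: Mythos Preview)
Your argument is correct, and the target claim --- that no window of $d=k(k+1)/2$ consecutive $n$ can have $p(n,k)$ even throughout --- is exactly the paper's Theorem~4.1. However, your proof of that claim is genuinely different from the paper's. The paper proceeds by induction on $k$: assuming the claim for $j-1$, a run of $\binom{j+1}{2}-1$ consecutive even values of $p(\cdot,j)$ is fed through the elementary recurrence $p(n,j)=p(n-j,j)+p(n,j-1)$ to produce a run of $\binom{j}{2}-1$ consecutive even values of $p(\cdot,j-1)$, after which the inductive hypothesis forces the next value to be odd and the recurrence pushes that oddness back up. You instead work at a single fixed $k$, use the full order-$d$ linear recurrence over $\mathbb{F}_2$ coming from $P_k(q)=\prod_{n=1}^k(1+q^n)$, and exploit the self-reciprocity of $P_k$ (so that $a_d=a_0=1$) to run a descent on the left endpoint of a hypothetical all-even window until it collides with $p(0,k)=1$. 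The paper's route is more elementary in that it uses only \eqref{eq1} and nothing about the shape of $P_k$; your route isolates the palindromicity of $P_k$ as the structural reason the recurrence is reversible, which makes the argument transparently portable to any sequence whose generating function has a palindromic denominator over $\mathbb{F}_2$.
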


\section{Periodicity of $p(n, k)$ modulo $m$}

\begin{lemma}
For any integer $n$ and any positive integer $ k >1$, 
 \begin{equation}
 p(n, k) = p(n - k, k) + p(n, k - 1), \label{eq1}
 \end{equation}
 where $p(0,k)=1$ and $p(n,k)=0$ if $n<0$. 
\end{lemma}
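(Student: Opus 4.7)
The plan is to prove the recurrence by a direct combinatorial bijection, partitioning the set counted by $p(n,k)$ according to whether the largest part equals $k$.

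First I would dispose of the boundary cases. If $n < 0$, both sides are zero by convention (note $n - k < n < 0$). If $n = 0$, the only partition is the empty partition, and the recurrence gives $p(0,k) = p(-k,k) + p(0,k-1) = 0 + 1 = 1$, which matches. So I may assume $n \geq 1$ and $k \geq 2$ in the main argument.

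Next I would partition the set $\mathcal{P}(n,k)$ of partitions of $n$ with all parts $\leq k$ into two disjoint subsets: $\mathcal{A}$, those partitions that contain at least one part equal to $k$, and $\mathcal{B}$, those in which every part is strictly less than $k$. By definition, $|\mathcal{B}| = p(n, k-1)$, since the partitions in $\mathcal{B}$ are exactly the partitions of $n$ into parts $\leq k-1$.

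For $\mathcal{A}$, I would exhibit a bijection $\mathcal{A} \to \mathcal{P}(n-k, k)$: given a partition in $\mathcal{A}$, remove one occurrence of the part $k$. The result is a partition of $n-k$ with all parts $\leq k$, so it lies in $\mathcal{P}(n-k,k)$. The inverse map simply appends a part $k$ to any partition of $n-k$ with parts $\leq k$, producing a partition of $n$ with parts $\leq k$ that contains at least one part equal to $k$. Hence $|\mathcal{A}| = p(n-k, k)$, and adding the two counts gives the identity.

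The only step that requires any care is ensuring the inverse map lands inside $\mathcal{A}$ (which is automatic since we explicitly append a $k$) and verifying the edge case $n < k$, where $\mathcal{A}$ is empty and $p(n-k,k) = 0$, so the recurrence reduces to $p(n,k) = p(n,k-1)$, consistent with the fact that no partition of such an $n$ can contain a part of size $k$. This exhausts all cases, so the recurrence holds for every integer $n$ and every $k > 1$.
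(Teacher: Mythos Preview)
Your proof is correct and follows essentially the same approach as the paper: both argue that $p(n-k,k)$ counts the partitions of $n$ with parts $\le k$ containing at least one part equal to $k$, while $p(n,k-1)$ counts those with no such part. Your version is simply more explicit about the bijection and the boundary cases.
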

\begin{proof}
For any $k > 1$, $p(n - k, k)$ counts the number of partitions of $n$ into parts of size at most $k$ with at least one part of size $k$. On the other hand, $p(n, k - 1)$ counts the number of partitions of $n$ into parts of size at most $k$ with no parts of size $k$. Summing both together, the identity follows.
\end{proof}

\subsection{Proof of Theorem~\ref{thm1.1}.}
We prove Theorem~\ref{thm1.1} by induction on $k$. 

For $k = 1$, the statement clearly holds since $p(n, 1) = 1 \equiv 1 \pmod m$. Hence $p(n, 1)$ has period $1$. 

Now, suppose that $p(n, k)$ is periodic with period $L$. We claim that $p(n, k + 1)$ has period $m(k+1)L$. To see this, note that for all positive integers $n$,
\begin{align*}
   & p(n + (k+1)L, k + 1) - p(n, k + 1) \\
    &= \sum_{i = 1}^{L}{p(n + (k + 1)i, k + 1) - p(n + (k + 1)(i-1), k + 1)}.
\end{align*}
By \eqref{eq1}, we can rewrite the above equation as
\begin{align}
    p(n + (k+1)L, k + 1) - p(n, k + 1) &= \sum_{i = 1}^{L}{p(n + (k + 1)i, k)}.
    \label{eq2}    
\end{align}
Since \eqref{eq2} holds for all positive integers $n$, for any non-negative integer $a$, we can substitute $n + a(k + 1)L$ for $n$:
\begin{align*}
  &  p(n + (a + 1)(k+1)L, k + 1) - p(n + a(k+1)L, k + 1) \\
  &= \sum_{i = 1}^{L}{p(n + a(k+1)L + (k + 1)i, k)}.
\end{align*}
But since $p(n, k)$ modulo $m$ has period $L$, we can simplify this to
\begin{align*}
    & p(n + (a + 1)(k+1)L, k + 1) - p(n + a(k+1)L, k + 1) \\
    &\equiv \sum_{i = 1}^{L}{p(n + (k + 1)i, k)} \pmod m,
\end{align*}
which we observe does not depend on $a$.
Summing over $a$, we obtain
\begin{align*}
&\indent p(n + m(k + 1)L, k + 1) - p(n, k + 1)\\
&=\sum_{a = 0}^{m-1}{p(n + (a+1)(k + 1)L, k + 1) - p(n + a(k + 1)L, k + 1)}\\
&\equiv\sum_{a = 0}^{m-1}\sum_{i = 1}^{L}{p(n + (k + 1)i, k)} \pmod m\\
&\equiv 0 \pmod m.
\end{align*}

So, $p(n, k + 1)$ is periodic with period $m(k + 1)L$, as desired. The result follows by induction. More directly, we can take the period of $p(n, k)$ to be $m^{k - 1}k!$.

\begin{remark}
We can use the periodicity of $p(n, k)$ modulo 2 to determine the odd density of the function, simply by checking the parity of $p(n, k)$ for $2^{k-1}k!$ consecutive values of $n$. With the use of a computer, this approach gives us the densities for the first $10$ values of $k$, which are shown in the following table.
\end{remark}
\begin{center}
\begin{table}[h]
\begin{tabular}{|l|c|c|c|c|c|c|c|c|c|c|}
\hline
$k$ & 1 & 2 & 3 & 4 & 5 & 6 & 7 & 8 & 9 & 10 \\ \hline
Odd Density of $p(n, k)$ & 1 & $\frac{1}{2}$& $\frac{5}{12}$&$\frac{11}{24}$&$\frac{1}{2}$& $\frac{29}{60}$&$\frac{23}{56}$&$\frac{1}{2}$& $\frac{27}{56}$& $\frac{1}{2}$    \\ \hline
\end{tabular}
\end{table}
\end{center}
\section{Upper bound for Odd Density of $p(n, k)$}
Let $k$ be fixed. Because $p(n, k)$ is periodic modulo 2, its odd density exists and is determined by the number of odd values of $p(n, k)$ over a finite range of $n$. We can use this fact to easily relate the number of odd values of $p(n, k - 1)$ to the number of odd values of $p(n, k)$ over $n$.
\begin{lemma} \label{lemma2} For all integers $k > 1$, $p(n, k)$
 satisfies
 \begin{align*}
& \lim_{N \rightarrow \infty}\frac{\#\{n \leq N | p(n, k) \isOdd, p(n, k - 1) \isOdd\}}{N}\\
& =\lim_{N \rightarrow \infty}\frac{\#\{n \leq N | p(n, k) \isEven, p(n, k - 1) \isOdd \}}{N}.
 \end{align*}
\end{lemma}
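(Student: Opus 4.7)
The plan is to reduce the claim to a comparison of densities of two sets that differ by a shift, and then invoke the periodicity of $p(n,k)$ modulo $2$ established in Theorem~\ref{thm1.1}. Applying the recurrence \eqref{eq1} modulo $2$, we have
\[ p(n,k-1) \equiv p(n,k) + p(n-k,k) \pmod{2}, \]
so $p(n,k-1)$ is odd if and only if $p(n,k)$ and $p(n-k,k)$ have opposite parities. Consequently, the two sets in the statement can be rewritten as
\begin{align*}
A &= \{\, n : p(n,k)\isOdd,\ p(n-k,k)\isEven\,\}, \\
B &= \{\, n : p(n,k)\isEven,\ p(n-k,k)\isOdd\,\}.
\end{align*}
This is the step that translates a statement about $p(n,k-1)$ into a statement about $p(n,k)$ alone, which will let us apply the periodicity result.

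Next, let $C = \{\, n : p(n,k)\isOdd,\ p(n-k,k)\isOdd\,\}$. Then $A \cup C = \{n : p(n,k)\isOdd\}$ and $B \cup C = \{n : p(n-k,k)\isOdd\}$, each as a disjoint union. I would then argue that these two latter sets have the same natural density. By Theorem~\ref{thm1.1}, $p(n,k)$ is periodic modulo $2$, say with period $L$; the set $\{n : p(n-k,k)\isOdd\}$ is the shift by $k$ of $\{n : p(n,k)\isOdd\}$, and shifting a periodic sequence by a fixed integer changes the counting function by an $O(k)$ boundary term. Dividing by $N$ and letting $N \to \infty$ kills this term, so the densities of $\{n : p(n,k)\isOdd\}$ and $\{n : p(n-k,k)\isOdd\}$ coincide.

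Combining the two decompositions and subtracting the common contribution from $C$ yields that $A$ and $B$ have equal density, which is exactly the claim. The only genuinely nontrivial point is that the natural density of $\{n : p(n,k)\isOdd\}$ exists in the first place, but this is immediate from periodicity modulo $2$: over any window of $L$ consecutive integers the number of odd values is a fixed constant, so the density exists and equals that constant divided by $L$. I do not anticipate a serious obstacle here; the main bookkeeping is just verifying that the boundary discrepancy in the shift argument is indeed $O(1)$ and therefore negligible in the density limit.
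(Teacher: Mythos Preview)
Your proof is correct and follows essentially the same approach as the paper: both use the recurrence \eqref{eq1} to translate the parity condition on $p(n,k-1)$ into one involving $p(n,k)$ and $p(n-k,k)$, and then invoke shift-invariance of density (from periodicity) to compare. The only cosmetic difference is the bookkeeping---the paper cancels the common term $\{p(n,k)\isOdd,\ p(n,k-1)\isEven\}$ from two expressions for the odd density of $p(n,k)$, whereas you cancel the common set $C=\{p(n,k)\isOdd,\ p(n-k,k)\isOdd\}$ from the decompositions of $\{p(n,k)\isOdd\}$ and $\{p(n-k,k)\isOdd\}$.
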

\begin{proof}
Let $k \in \mathbb{N}$ be fixed. Consider
\begin{align*}
&\indent\lim_{N \rightarrow \infty}\frac{\#\{n \leq N | p(n, k) \isOdd\}}{N}\\&= \lim_{N \rightarrow \infty}\frac{\#\{n \leq N |p(n, k)\isOdd, p(n, k - 1) \isEven\}}{N}\\ &+\lim_{N \rightarrow \infty}\frac{\#\{n \leq N |p(n, k)\isOdd, p(n, k - 1) \isOdd\}}{N}.
\end{align*}
By \eqref{eq1}, we can simplify this to
\begin{align*}
&\indent\lim_{N \rightarrow \infty}\frac{\#\{n \leq N | p(n, k) \isOdd\}}{N}\\&=
    \lim_{N \rightarrow \infty}\frac{\#\{n \leq N | p(n + k, k) - p(n + k, k - 1) \isOdd\}}{N}.
\end{align*}
Since we are taking $N \rightarrow \infty$, we can replace instances of $n + k$ with $n$.
\begin{align*}
    &=\lim_{N \rightarrow \infty}\frac{\#\{n \leq N |p(n, k) - p(n, k - 1) \isOdd\}}{N}\\
    &=\lim_{N \rightarrow \infty}\frac{\#\{n \leq N |p(n, k)\isOdd, p(n, k - 1) \isEven\}}{N}\\
    &+\lim_{N \rightarrow \infty}\frac{\#\{n \leq N |p(n, k)\isEven, p(n, k - 1) \isOdd\}}{N}.
\end{align*}
By comparing this to the original expression and cancelling terms, the desired result follows.
\end{proof}
Let $k$ be a fixed positive integer. Lemma \ref{lemma2} tells us that for every two odd values of $p(n, k - 1)$, there is one even value of $p(n, k)$. For instance, if $p(n, k - 1)$ is odd for all $n$, then $p(n, k)$ will be odd for half of the values of $n$. In this sense, odd values are ``better behaved'' than even values. We can predict the odd density of $p(n, k)$ using the number of odd values of $p(n, k - 1)$, while even values give no information. Using this property, we can set a bound for the odd density of $p(n, k)$ for a fixed $k$.

\subsection{Proof of Theorem~\ref{thm1.2}}
We will prove the result by showing that if the odd density of $p(n, k)$ is greater than $\frac{2}{3}$ for some $k$, the odd density of $p(n, k + 1)$ must be less than $\frac{2}{3}$. Hence the odd density would have to be at most $\frac{2}{3}$ for infinitely many $k$.

Suppose that $$\lim_{N \rightarrow \infty}\frac{\#\{n \leq N | p(n, k) \isOdd\}}{N} > \frac{2}{3}.$$
Then we claim that the odd density in $p(n, k + 1)$ is less than or equal to $\frac{2}{3}$. To see this, we apply Lemma 3.1 to rewrite the inequality as
\begin{align*}
2\lim_{N \rightarrow \infty}\frac{\#\{n \leq N | p(n, k) \isOdd, p(n, k + 1) \isEven\}}{N}& > \frac{2}{3}\\
\lim_{N \rightarrow \infty}\frac{\#\{n \leq N | p(n, k) \isOdd, p(n, k + 1) \isEven\}}{N}& > \frac{1}{3}.
\end{align*}
So certainly,
$$ \lim_{N \rightarrow \infty}\frac{\#\{n \leq N | p(n, k + 1) \isEven\}}{N} > \frac{1}{3}.$$
Therefore, there exist infinitely many values of $k$ for which the odd density is less than or equal to $\frac{2}{3}$.

\section{Lower Bound for Odd Density of $p(n, k)$}
Let $k$ be a fixed positive integer. By Theorem~\ref{thm1.1}, we know that $p(n, k)$ is periodic modulo 2,
so we can write its generating function as
$$
\sum_{n=0}^{\infty}p(n,k)q^n = \prod_{n=1}^{k}\frac{1}{(1-q^n)} \equiv \frac{a(q)}{1 - q^L} \pmod 2,
$$
where $L$ is the period of $p(n,k)$ and $a(q)$ is a polynomial with degree less than $L$. Rearranging to eliminate the denominators, we obtain
\begin{align*}
a(q)\prod_{n=1}^{k}(1-q^n) &\equiv 1-q^L \pmod 2 ,
\end{align*}
from which we see that
\begin{align*}
\deg a(q) &= L - \frac{k(k + 1)}{2}
\end{align*}
But this means that the last $\frac{k(k+1)}{2} - 1$ terms of a period of $p(n, k)$ must be zero modulo $2$, since the last nonzero coefficient of $a(q)$ is that of $a^{L - \frac{k(k+1)}{2}}$. So for any fixed $k$, there exist $\frac{k(k+1)}{2} - 1$ consecutive values of $n$ such that $p(n, k)$ is divisible by $2$. We shall show that this is the longest consecutive string of zeroes.
\begin{theorem}\label{thm4}
For any positive integer $k$, there exist at most $\frac{k(k+1)}{2} - 1$ consecutive values of $n$ such that $p(n, k)$ is even.
\end{theorem}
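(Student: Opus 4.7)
The plan is to exploit the generating function identity $\prod_{n=1}^{k}(1-q^n)\cdot\sum_{n\ge 0}p(n,k)q^n = 1$ to obtain a linear recurrence for $p(n,k)$ mod $2$, and then to show that any hypothetical run of $M := \frac{k(k+1)}{2}$ consecutive even values propagates backward to force $p(0,k)$ to be even, contradicting $p(0,k)=1$.

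First I would expand $\prod_{n=1}^{k}(1-q^n) = \sum_{j=0}^{M}c_j q^j$, noting that $c_0 = 1$ and the leading coefficient $c_M = (-1)^k$, which is odd. Comparing coefficients of $q^n$ for $n \ge 1$ in the identity above yields
\begin{equation*}
\sum_{j=0}^{M} c_j\, p(n-j,k) \;=\; 0,
\end{equation*}
a linear recurrence of order $M$ whose leading coefficient is a unit modulo $2$. Reducing mod $2$, this recurrence determines $p(n-M,k) \bmod 2$ from $p(n,k), p(n-1,k), \ldots, p(n-M+1,k) \bmod 2$, so zeros propagate backward just as they do forward.

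Next I would argue by contradiction: suppose there exist $M$ consecutive indices $n_0, n_0+1, \ldots, n_0+M-1$ at which $p(n,k)$ is even. If $n_0 = 0$ this immediately contradicts $p(0,k)=1$. Otherwise, applying the recurrence at $n = n_0+M-1 \ge 1$, every summand with $j \le M-1$ involves $p(n_0+M-1-j,k)$ with the index in $[n_0, n_0+M-1]$, hence is even. The only surviving term mod $2$ is $c_M\, p(n_0-1,k)$; since $c_M \equiv 1 \pmod{2}$, we conclude $p(n_0-1,k)$ is also even. This produces a run of $M$ consecutive even values starting at $n_0 - 1$, and iterating this backward reduction eventually forces $p(0,k) \equiv 0 \pmod{2}$, the desired contradiction.

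The argument is essentially routine linear-recurrence bookkeeping once the recurrence is extracted; the only thing worth checking carefully is that the leading coefficient $c_M$ is odd (immediate, since it equals $(-1)^k$) and that the recurrence is valid for all $n\ge 1$, with the convention $p(n,k)=0$ for $n<0$ causing no trouble because we only ever apply it to nonnegative indices during the backward descent. I do not anticipate any genuine obstacle.
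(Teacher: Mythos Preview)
Your argument is correct, but it takes a different route from the paper's. The paper proceeds by induction on $k$ via the two-variable recurrence $p(n,k)=p(n-k,k)+p(n,k-1)$: a block of $\tfrac{k(k+1)}{2}-1$ consecutive even values of $p(\,\cdot\,,k)$ produces, by differencing, a block of $\tfrac{(k-1)k}{2}-1$ consecutive even values of $p(\,\cdot\,,k-1)$; the inductive hypothesis then forces the next $p(\,\cdot\,,k-1)$ value to be odd, which in turn forces the next $p(\,\cdot\,,k)$ value to be odd. You instead extract the single order-$M$ linear recurrence in $n$ alone from $\prod_{j=1}^{k}(1-q^j)\sum_n p(n,k)q^n=1$, observe that the trailing coefficient $c_M=(-1)^k$ is a unit modulo $2$, and let a hypothetical block of $M$ zeros propagate backward to $p(0,k)$. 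Your version is tidier in that it avoids the induction on $k$ entirely, and it makes transparent that the same bound holds modulo every $m$ (since $c_0,c_M=\pm1$ are units in $\mathbb{Z}/m\mathbb{Z}$). The paper's forward induction, by contrast, yields the slightly sharper qualitative statement that the value immediately following any maximal even run is necessarily odd, and its linkage of runs at level $k$ to runs at level $k-1$ may be more useful if one wants finer structural information.
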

\begin{proof}
We prove by induction on $k$. If $k = 1$, then $p(n, 1) = 1$ for all positive integers $n$, so the statement is true. This establishes the base case. Now, suppose that the statement is true for $k = j - 1$. Then, if
\begin{align*}
p(i, j), p(i + 1, j), \cdots p(i + j(j + 1)/2 - 2, j)
\end{align*}
are all even, then
\begin{align*}
p(i + j, j) - p(i, j), p(i + 1 + j, j) - p(i + 1, j), \cdots,\\ p(i + j(j + 1) / 2 - 2, j) - p(i+ j(j + 1)/2 - 2 - j, j)
\end{align*}
are all even. Stated differently, this implies that
\begin{align*}
p_{}(i + j, j - 1), p(i + j + 1, j - 1), \cdots, p(i + j(j + 1) / 2 - 2, j - 1)
\end{align*}are all even. This is a sequence of $\frac{(j-1)j}{2} - 1$ consecutive even numbers. So, by the inductive hypothesis, $p(i + j(j + 1)/2 - 1, j - 1)$ must be odd. But we already know that $$p(i + j(j + 1)/2 - 1 - j, j)$$ is even, so
\begin{align*}
p(i + j(j + 1)/2 - 1, j) = p(i + j(j + 1)/2 - 1, j - 1) + p(i + j(j + 1)/2 - 1 - j, j)
\end{align*}
is odd. The result follows by induction.
\end{proof}
The preceding theorem immediately yields a lower bound for the odd density of $p(n, k)$.

\subsection{Proof of Theorem~\ref{thm1.3}}

Fix a positive integer $k$ and consider the sequence of terms of $p(n, k)$. By Theorem \ref{thm4}, there can be at most $\frac{k(k+1)}{2} - 1$ even terms in a row. Therefore, among every  $\frac{k(k+1)}{2}$ consecutive terms of $p(n, k)$, there must be at least one odd term. Thus, $$\lim_{N \rightarrow \infty}\frac{\#\{n \leq N | p(n, k) \isOdd\}}{N} \geq \frac{2}{k(k+1)}.$$ 
\vfill\eject
\section*{ Acknowledgements}
The author would like to thank Ae Ja Yee for her guidance and time.


\begin{thebibliography}{1}

\bibitem{Ahlgren} 
S. Ahlgren. Distribution of the Parity of the Partition Function in Arithmetic Progressions.
\textit{Indagationes Mathematicae} \textbf{10} (1999), 173-181.

\bibitem{Andrews}
G. E. Andrews. \textit{The Theory of Partitions.} Addison-Wesley, Reading, Mass., 1976; reprinted, Cambridge University Press, 1998.

\bibitem{Kronholm}
B. Kronholm. On Congruence Properties of $p(n, m)$.
\textit{Proceedings of the American Mathematical Society} (2005, April), 2891-2895.

\bibitem{Newman} 
M. Newman. Periodicity Modulo $m$ and Divisibility Properties of the Partition Function.
\textit{Transactions of the American Mathematical Society} (1960, November), 225-236.

\bibitem{Nicolas} 
J. Nicolas, I. Ruzsa, A. S\'ark\"ozy. On the Parity of Additive Representation Functions.
\textit{Journal of Number Theory} \textbf{73} (1998), 292-317.
\end{thebibliography}
\end{document}